\numberwithin{equation}{section} \textwidth=15.5cm
\newtheorem{theorem}{Theorem}[section]
\newtheorem{lemma}[theorem]{Lemma}
\newtheorem{example}[theorem]{Example}
\theoremstyle{definition}
\begin{document}

\numberwithin{equation}{section}

\title[Local derivations and Commutativity ]{Local derivations on measurable operators and Commutativity}

\author[Ayupov]{Shavkat Ayupov}
\email{sh$_{-}$ayupov@mail.ru}
\address{Dormon yoli 29, Institute of
 Mathematics,  National University of
Uzbekistan,  100125  Tashkent,   Uzbekistan}

\author[Kudaybergenov]{Karimbergen Kudaybergenov}
\email{karim2006@mail.ru}
\address{Ch. Abdirov 1, Department of Mathematics, Karakalpak State University, Nukus 230113, Uzbekistan}



\date{}
\maketitle

\begin{abstract} We prove that a von Neumann algebra \(M\) is abelian if and only if the square
of every derivation on the algebra $S(M)$ of measurable operators affiliated with \(M\) is a local derivation.
We also show that for general associative unital algebras this is not true.

\end{abstract}

{\it Keywords:} von Neumann algebra, measurable operators,
derivation, local derivation.
\\

{\it AMS Subject Classification:} 17A36,  46L57.

\maketitle \thispagestyle{empty}


\section{Introduction}\label{sec:intro}

 Let
$\mathcal{A}$ be an algebra over the field of complex numbers. A
linear operator $D:\mathcal{A}\rightarrow
\mathcal{A}$ is called  \textit{a derivation} if it satisfies the identity
$D(xy)=D(x)y+xD(y)$ for all  $x, y\in \mathcal{A}$ (Leibniz rule).
Each element  $a\in \mathcal{A}$ defines a derivation $D_a$
on $\mathcal{A}$ given by $D_a(x)=ax-xa,\,x\in \mathcal{A}.$ Such
derivations $D_a$ are said to be \textit{inner}. If the element
$a$ implementing the derivation $D_a$ on $\mathcal{A},$ belongs to
a larger algebra $\mathcal{B},$ containing  $\mathcal{A}$ (as a
proper ideal as usual) then $D_a$ is called a  \textit{spatial
derivation}.

One of the main problems considered in the theory of derivations
is to prove the automatic continuity, innerness or spatialness of
derivations, or to show the existence of non inner and
discontinuous derivations on various topological algebras.

In particular, it is a general algebraic problem to find  algebras
which admit only inner derivations.

Such problems in the framework of algebras of measurable operators
affiliated with von Neumann algebras has been considered in
several papers (e.g. ~\cite{Alb2, AK2011, Ayupov, AK2013,
AKIE13}).

A linear operator $\Delta$ on an algebra $\mathcal{A}$ is called
\textit{local derivation} if given any $x\in \mathcal{A}$ there
exists a derivation $D$ (depending on $x$) such that
$\Delta(x)=D(x).$  The main problem concerning these operators is
to find conditions under which local derivations become
derivations and to present examples of algebras which admit local
derivations that are not derivations (see e.g.~\cite{Kad},
\cite{Lar}). In particular Kadison in ~\cite{Kad} proves that
every continuous local derivation from a von Neumann algebra $M$
into a dual $M$-bimodule is a derivation.
 This theorem gave rise to studies and several results
 on local derivations on C$^\ast$-algebras, culminating with a
definitive contribution due to Johnson ~\cite{John},
who proved that  every (not necessary continuous) local derivation
of a C$^\ast$-algebra  is a derivation. A comprehensive survey of
recent results concerning local  derivations on
C$^\ast$- and von Neumann algebras is presented in ~\cite{AKP}.

In \cite{Nur} we have  studied local derivations on the algebra
$S(M,\tau)$ of all $\tau$-measurable operators affiliated with a
von Neumann algebra $M$ and a faithful normal semi-finite trace
$\tau.$ One of our main results in \cite{Nur} (Theorem~2.1) presents an
unbounded version of Kadison's Theorem A from~\cite{Kad}, and it
asserts that every local derivation on $S(M,\tau)$ which is
continuous in the measure topology automatically becomes a
derivation. In particular, for type I von Neumann algebras $M$ all
such local derivations on $S(M,\tau)$ are inner derivations. We
also proved   that for type I finite von Neumann algebras without
abelian direct summands, as well as for von Neumann algebras with
the atomic lattice of projections, the continuity condition on
local derivations in the above theorem  is redundant. For survey of results concerning
local derivations on algebras of measurable operators we refer to \cite{AK}.

The most interesting effects appear when  we consider the problem of existence of  local
derivations which are not derivations. The consideration of such examples on various  finite-
and infinite dimensional algebras was initiated  by Kadison,
Kaplansky and Jensen (see~\cite{Kad}). We have considered this
problem on a class of commutative regular algebras, which includes
the algebras of measurable operators affiliated with abelian von Neumann algebras.
Unlike C$^\ast$- and von Neumann algebras cases, when all derivations, and hence all local derivations, are trivial,
 the abelian algebras $S(M)$ may admit non-zero derivations (see ~\cite{Ber}).
In ~\cite[Theorem 3.2]{Nur} the authors obtained necessary and
sufficient conditions  for the algebras of measurable and
$\tau$-measurable operators affiliated with a commutative von
Neumann algebra to admit local derivations which are not
derivations. In the latter paper we have proved some properties of
derivations and local derivations which seem to be specific for
the abelian case as it was noted by Professor E.~Zelmanov (UCSD).
In the present paper we confirm this conjecture and give some
criteria for a von Neumann algebra $M$ to be abelian in terms of
properties of derivations and local derivations on the algebra
$S(M)$ of measurable operators affiliated with $M$.

\section{The Main result}\label{sec:main}

The following theorem is the main result of the paper.

\begin{theorem}\label{mainresult}
Let \(S(M)\) be the algebra of measurable operators affiliated
with a von Neumann algebra \(M.\) The following conditions are
equivalent:
\begin{itemize}
\item[(a)]     \(M\) is abelian;
\item[(b)]      For every derivation \(D\) on \(S(M)\) its square \(D^2\) is a local derivation;
\item[(c)]  A linear map \(\Delta: S(M) \to S(M)\) is a local derivation if and only if
\(\Delta(p)=0\) for all projection \(p\) in \(M,\) and
\(s(\Delta(x))\leq s(x)\) for all \(x\)  in \(S(M)\),
 where \(s(x)\) is the support projection of the element \(x.\)
\end{itemize}
\end{theorem}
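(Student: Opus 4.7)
I would establish the cyclic chain of implications $(a)\Rightarrow(c)\Rightarrow(b)\Rightarrow(a)$. The implication $(a)\Rightarrow(c)$ is essentially a quotation: when $M$ is abelian, $S(M)$ is a commutative regular algebra, and the stated ``iff'' characterization of local derivations is precisely \cite[Theorem~3.2]{Nur}.

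For $(c)\Rightarrow(b)$, let $D$ be any derivation on $S(M)$. Since a derivation is \emph{a fortiori} a local derivation, applying the ``only if'' direction of $(c)$ to $\Delta=D$ yields $D(p)=0$ for every projection $p\in M$ and $s(D(x))\leq s(x)$ for every $x\in S(M)$. Iterating, $D^{2}(p)=D(0)=0$ and $s(D^{2}(x))\leq s(D(x))\leq s(x)$, so the ``if'' direction of $(c)$ guarantees that $D^{2}$ is a local derivation.

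The main obstacle is $(b)\Rightarrow(a)$, which I would prove by contraposition. If $M$ is not abelian, I choose a self-adjoint $a\in M$ and a projection $p\in M$ with $ap\neq pa$, and consider the inner derivation $D=D_{a}$. A short computation gives
\begin{equation*}
D^{2}(p)=a^{2}p-2apa+pa^{2},\qquad pD^{2}(p)p=2\,pa(1-p)ap=2\,pa(1-p)\bigl(pa(1-p)\bigr)^{*}\neq 0,
\end{equation*}
the last inequality because $a=a^{*}$ and $pa(1-p)=pa-pap\neq 0$. On the other hand, applying any derivation $D'$ to $p=p^{2}$ and sandwiching by $p$ on both sides gives the universal identity $pD'(p)p=0$. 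Hence no derivation can satisfy $D'(p)=D^{2}(p)$, so $D^{2}$ fails to be a local derivation, contradicting $(b)$.

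The crux of the argument is this last step: the elementary ``off-diagonal'' identity $pD'(p)p=0$ obeyed by every derivation supplies a simple, computable obstruction that $D_{a}^{2}(p)$ violates the moment $a$ and $p$ fail to commute, and recognizing the diagonal block as the positive square $\bigl(pa(1-p)\bigr)\bigl(pa(1-p)\bigr)^{*}$ is what turns non-commutativity of $M$ into a genuine nonzero obstruction.
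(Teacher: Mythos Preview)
Your proof is correct, and for the implications $(a)\Rightarrow(c)$ and $(c)\Rightarrow(b)$ it coincides with the paper's argument. The difference lies in $(b)\Rightarrow(a)$.

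The paper also argues by contradiction and also relies on the universal identity $p\,\Delta(p)\,p=0$ for local derivations, but it produces the contradiction via a structural construction: starting from a non-central projection $e$, it invokes comparison theory \cite[Proposition~6.1.8]{KadisonII} to find equivalent orthogonal subprojections $p\leq e$, $q\leq\mathbf{1}-e$, takes a partial isometry $u$ with $uu^{*}=p$, $u^{*}u=q$, and sets $a=u+u^{*}$. One then checks directly that $pa^{2}p=p$ while $papap=0$, so the identity $pa^{2}p=papap$ forced by $(b)$ fails.

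Your route bypasses comparison theory entirely: you take \emph{any} self-adjoint $a$ and projection $p$ with $ap\neq pa$ and observe that
\[
p\,D_{a}^{2}(p)\,p \;=\; 2\bigl(pa(\mathbf{1}-p)\bigr)\bigl(pa(\mathbf{1}-p)\bigr)^{*}
\]
is a nonzero positive operator. The positivity trick is the real gain --- it converts the qualitative hypothesis ``$a$ and $p$ do not commute'' directly into $p\,D_{a}^{2}(p)\,p\neq 0$ without any explicit matrix-unit bookkeeping. (Your one implicit step, that $pa(\mathbf{1}-p)\neq 0$, is immediate: if $pa(\mathbf{1}-p)=0$ then $pa=pap$, and taking adjoints with $a=a^{*}$ gives $ap=pap=pa$.) Both arguments are short, but yours is more self-contained and avoids the appeal to the structure theory of projections.
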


Let us first present some necessary definitions and facts from the theory of
measurable operators, which  will be used in the proof of this theorem.

Let \(H\) be a Hilbert space over the field \(\mathbb{C}\) of
complex numbers and let \(B(H)\) be the algebra of all bounded
linear operators on \(H.\) Denote by \(\mathbf{1}\) the identity
operator on \(H\) and let \(\mathcal{P}(H) = \{p \in B(H) : p^2 =
p^2 = p^\ast\}\) be the lattice of projections in \(B(H).\)
Consider a von Neumann algebra \(M\) on \(H,\) i.e. a weakly
closed *-subalgebra in \(B(H)\) containing the operator
\(\mathbf{1}\) and denote by \(\|\cdot\|_M\)  the operator norm on
\(M.\) The set \(\mathcal{P}(M) = \mathcal{P}(H)\cap M\) is a
complete orthomodular lattice with respect to the natural partial
order on \(M_h = \{x \in  M: x^\ast = x\},\) generated by the cone
\(M_+\) of positive operators from \(M.\)

Two projections $e,f\in \mathcal{P}(M)$ are said to be
\textit{equivalent} (denoted by $e\sim f$) if there exists a
partial isometry $v\in M$ with initial projection $e$ and  final
projection $f$, i.e. $v^{\ast}v=e,\, vv^{\ast}=f $. The relation
$"\sim"$ is equivalence relation on the lattice $\mathcal{P}(M).$

A linear subspace $\mathcal{D}$ in $H$ is said to be
\textit{affiliated} with $M$ (denoted as $\mathcal{D}\eta M$), if
$u(\mathcal{D})\subset \mathcal{D}$ for every unitary  $u$ in the
commutant
$$M'=\{y\in B(H):xy=yx, \,\forall x\in M\}$$ of the von Neumann algebra $M$ in $B(H)$.

A linear operator  $x$ on  $H$ with the domain  $\mathcal{D}(x)$
is said to be \textit{affiliated} with  $M$ (denoted as  $x\eta
M$) if $\mathcal{D}(x)\eta M$ and $u(x(\xi))=x(u(\xi))$
 for all  $\xi\in
\mathcal{D}(x)$ and for every unitary  $u$ in $M'$.

A linear subspace $\mathcal{D}$ in $H$ is said to be
\textit{strongly dense} in  $H$ with respect to the von Neumann
algebra  $M,$ if
\begin{enumerate}
\item[1)] $\mathcal{D}\eta M;$
\item[2)] there exists a sequence of projections
$\{p_n\}_{n=1}^{\infty}$ in $\mathcal{P}(M)$  such that
$p_n\uparrow\textbf{1},$ $p_n(H)\subset \mathcal{D}$ and
$p^{\perp}_n=\textbf{1}-p_n$ is finite in  $M$ for all
$n\in\mathbb{N}.$
\end{enumerate}

A closed linear operator  $x$ acting in the Hilbert space $H$ is
said to be \textit{measurable} with respect to the von Neumann
algebra  $M,$ if
 $x\eta M$ and $\mathcal{D}(x)$ is strongly dense in  $H.$ Denote by
 $S(M)$ the set of all measurable operators with respect to
 $M.$

It is well-known (see e.g.~\cite{Mur}) that the set $S(M)$  is a
unital *-algebra when equipped with the algebraic operations of
the strong addition and multiplication and taking the adjoint of
an operator.

For every \(x \in  S(M)\)  we set \(s(x) = l(x)\vee r(x),\) where
\(l(x)\) and \(r(x)\) are left and right supports of \(x\)
respectively (see \cite{Mur}).

Let \(\Delta:S(M)\to S(M)\) be a local derivation. Then
\begin{equation}\label{localidem}
p\Delta(p)p=0
\end{equation}
for every idempotent \(p\in M.\)

Indeed, for any idempotent $p\in M$, we have
\[
\Delta(p)=D_p(p)=D(p^2)=D_p(p)p + pD_p(p)= \Delta(p)p +
p\Delta(p).
\]
Thus
\[
\Delta(p)=\Delta(p)p + p\Delta(p)
\]
for every idempotent $p\in M.$ Multiplying both sides of the last
equality by \(p\) we obtain \eqref{localidem}.

Let \(M\) be an abelian von Neumann algebra and let \(D\) be a
derivation on \(S(M).\) By \cite[Proposition 2.3]{Ber} or
\cite[Theorem]{Ayupov}  we have that \(s(D(x)) \leq s(x)\) for any
\(x\in S(M)\) and also \(D(p)=0\) for any  \(p\in
\mathcal{P}(M).\) Therefore by the definition, each local
derivation \(\Delta\) on \(S(M)\) satisfies the following two
conditions:
\begin{equation}\label{firstcon}
s(\Delta(x)) \leq s(x)
\end{equation}
for all \(x\in S(M)\) and
\begin{equation}\label{secondcon}
\Delta|_{\mathcal{P}(M)} =0.
\end{equation}
This means that \eqref{firstcon} and \eqref{secondcon} are
necessary conditions for a linear operator \(\Delta\) to be a
local derivation on the algebra \(S(M).\) We are going to show
that these two condition are in fact also sufficient. The following Lemma
in a more general setting of regular commutative algebras was obtained in \cite[Lemma 3.2]{Nur}.
For the sake of completeness below we an give an alternative and straightforward proof of this result.

\begin{lemma}\label{lemm}
Let \(M\) be an abelian von Neumann algebra. Then each linear
operator \(\Delta\) on the algebra \(S(M)\) satisfying conditions
\eqref{firstcon} and \eqref{secondcon} is a local derivation on
\(S(M).\)
\end{lemma}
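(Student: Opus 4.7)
The plan is to produce, for each fixed $x \in S(M)$, a derivation $D$ on $S(M)$ satisfying $D(x) = \Delta(x)$; this is precisely the definition of $\Delta$ being a local derivation.

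I would first reduce to the invertible case via the central support. Set $e = s(x)$; because $M$ is abelian, $e$ is central in $M$, and $S(M) = eS(M) \oplus e^{\perp}S(M)$ as a direct sum of unital algebras. Any pair of derivations on the two summands assembles into a derivation on $S(M)$. By condition \eqref{firstcon}, both $x$ and $\Delta(x)$ vanish on the $e^{\perp}$-summand, where one may therefore take the zero derivation. It remains to construct a derivation $D_{0}$ on $eS(M)$ sending $ex$ to $\Delta(x)$, noting that $ex$ has full support in $eS(M)$ and is thus invertible there.

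The heart of the argument is the construction of $D_{0}$ on the commutative regular algebra $eS(M)$. Here I would invoke the existence theory of derivations on $S$ of an abelian von Neumann algebra from \cite{Ber}. Every derivation on $eS(M)$ annihilates each projection, and hence the $*$-subalgebra of ``step'' elements generated by the projections of $eM$. Identifying $eS(M)$ with $L^{0}$ of a localizable measure space, this step subalgebra plays the role of a base ring over which $L^{0}$ admits a transcendence basis, and any set-theoretic assignment of values to such a basis extends uniquely to a derivation via the Leibniz rule. I would include the invertible element $ex$ in some transcendence basis, send it to $\Delta(x)$, and send every other basis element to $0$; the resulting derivation is the required $D_{0}$. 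Reassembling $D_{0}$ with the zero derivation on $e^{\perp}S(M)$ yields $D$.

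The principal difficulty sits in the transcendence-basis step: one has to verify that $ex$ can always be incorporated into a transcendence basis of $eS(M)$ over the step subalgebra and that the Leibniz extension is globally well-defined on $eS(M)$. On the atomic summand of $M$ the relevant factor of $S(M)$ collapses to a product of copies of $\mathbb{C}$, on which conditions \eqref{firstcon} and \eqref{secondcon} combined with linearity already force $\Delta$ to vanish, so the zero derivation trivially works there; the substantive content is confined to the diffuse part of $M$, where the construction of \cite{Ber} genuinely produces the nontrivial derivation $D_{0}$.
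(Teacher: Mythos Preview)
Your reduction to the summand $eS(M)$ with $e=s(x)$ is clean, but the core step --- ``include $ex$ in a transcendence basis of $eS(M)$ over the step subalgebra and extend by Leibniz'' --- does not go through as stated. The transcendence-basis mechanism for prescribing derivations is a \emph{field}-theoretic fact, and $eS(M)\cong L^0$ is a von Neumann regular ring with abundant zero divisors. In such a ring the notion of algebraic independence is badly behaved: for any projection $p\le e$ on which $px$ happens to be a step function one has the nontrivial relation $p\cdot q(ex)=0$ for a suitable polynomial $q$ with step-function coefficients, so $ex$ is never ``transcendental'' in the naive ring-theoretic sense unless $x$ itself is a step function (in which case it is algebraic and cannot sit in any transcendence basis). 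Consequently there is no freely assignable basis element to send to $\Delta(x)$; one must instead check that every such local relation is compatible with the assignment $D(ex)=\Delta(x)$, and that verification is precisely the substance of the lemma, not a formality. You flag this as ``the principal difficulty'' but do not resolve it.

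The paper's argument bypasses the transcendence-basis issue entirely. It works over $\mathbb{C}$, not over the step subalgebra: for the fixed element $a$ it defines $D$ on the single-variable polynomial algebra $\mathcal{A}(a)=\{f(a):f(0)=0\}$ by $D(f(a))=f'(a)\Delta(a)$, and then proves directly (using both \eqref{firstcon} and \eqref{secondcon}) the support inequality $s(D(f(a)))\le s(f(a))$. The key computation is that on the complement of $s(f(a))$ the element $a$ takes only finitely many values (roots of $f$), hence is a step function there, so \eqref{secondcon} forces the relevant piece of $\Delta(a)$ to vanish. With this support estimate in hand, the extension theorem \cite[Theorem~3.1]{Ber} for derivations on complete metrizable regular algebras yields a global derivation on $S(M)$ agreeing with $\Delta$ at $a$. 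This is the step your sketch is missing: rather than a transcendence basis, one needs the support condition on the small subalgebra plus the Ber--Chilin--Sukochev extension theorem.
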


\begin{proof} Let us first assume that \(M\) is an abelian von Neumann algebra with a faithful normal
finite trace \(\tau.\) In this case the algebra \(S(M)\) is a
complete metrizable regular algebra with respect to the metric
\(\rho(x,y)=\tau(s(x-y)),\, x, y\in S(M)\) (see \cite[Example
2.6]{Ber}).

It is well-known that the abelian von Neumann algebra \(M\) can be
identified with the algebra \( C(\Omega)\) of all complex valued
continuous functions on a hyperstonean compact space \(\Omega.\)

Let \(a\in S(M)\) be a fixed element. In order to proof that the linear map \(\Delta\) is
 a local derivation we must find a derivation
\(D\) on \(S(M)\) such that \(\Delta(a)=D(a).\)

Let \(P[t]\) be the algebra  of all polynomials at variable \(t\)
over \(\mathbb{C}.\) Consider the following subalgebra of
\(S(M):\)
\[
\mathcal{A}(a)=\left\{f(a): f\in P[t], f(0)=0\right\}.
\]
Consider the linear operator \(D\) from  \(\mathcal{A}(a)\)
into \(S(M)\) defined as
\begin{equation}\label{deri}
D(f(a))=f'(a)\Delta(a),
\end{equation}
where \(f'\) is the  usual derivative of the polynomial \(f.\) It
is clear that \(D\) is a derivation. Let us show that
\begin{equation}\label{supp}
s(D(f(a)))\leq s(f(a))
\end{equation}
for all \(f\in P[t]\) with \(f(0)=0.\)

 Case 1. Suppose first that  $a$ is a bounded element, i.e.  \(a\in M\equiv C(\Omega).\)
Take an arbitrary \(f(t)=\sum\limits_{k=1}^n \lambda_k t^k\in
P[t].\) Denote  \(e=\mathbf{1}-s(f(a)).\) There exists a closed
subset \(S\) of \(\Omega\) such that \(e\) equals to the
characteristic function \(\chi_S\) of the set \(S.\) Since
\(ef(a)=0,\) it follows that \(\sum\limits_{k=0}^n \lambda_k a(t)^k=0\) for all \(t\in S,\)
 that is the complex numbers \(a(t)\) are roots of
\(f\) for all $t\in S.$ Since the polynomial $f$ may have at most  $n$ roots the set \(\left\{a(t): t\in
S\right\} \) is finite. This means that  \(e a\) is a simple (step) function, i.e. it is
a linear combination of projections. Since by \eqref{secondcon} \(\Delta(p)=0\) for all \(p\in
\mathcal{P}(M),\) it follows that \(\Delta(e a)=0.\) Further from the linearity of \(\Delta\)
and properties of the support we have
\begin{eqnarray*}
s\left(\Delta(a)\right) & = &
s\left(\Delta((\mathbf{1}-e)a+ea)\right)=
s\left(\Delta((\mathbf{1}-e)a)+\Delta(ea)\right)= \\
& = & s\left(\Delta((\mathbf{1}-e)a) \right)\leq
s(\mathbf{1}-e)=s(f(a)).
\end{eqnarray*}
Therefore
\begin{eqnarray*}
s\left(D(f(a))\right) & = & s\left(f'(a)\Delta(a)\right)\leq
s\left(\Delta(a)\right) \leq s(f(a)).
\end{eqnarray*}

 Case 2. Let \(a\in S(M)\) be an arbitrary element. There exists an increasing
 sequence of projections \(\{e_n\}\) in \(M\) such that \(e_n a\in
 M\) for all \(n\in \mathbb{N}\) and \(\bigvee\limits_{n\in
 \mathbb{N}}e_n=\mathbf{1}.\) Taking into account  the equalities
 \(e_nD(x)=D(e_n x),\)  \(e_n f(a)=f(e_n a)\) (the second equality follows from \(f(0)=0\)) and the Case 1, we obtain that
\begin{eqnarray*}
e_n s\left(D(f(a))\right) & = & s\left(e_n
D(f(a))\right)=s\left(D(e_n f(a))\right)=s\left(D(f(e_n
a))\right)\leq \\
& \leq & s\left(f(e_n a)\right) = s(e_n f(a))=e_n s(f(a))
\end{eqnarray*}
for all \(n.\) Thus
\begin{eqnarray*}
s\left(D(f(a))\right) \leq s(f(a)).
\end{eqnarray*}
Thus, the derivation \(D\) defined by \eqref{deri} satisfies the
condition \eqref{supp}. Since \(S(M)\) is a complete metrizable
regular algebra,  by \cite[Theorem 3.1]{Ber} the derivation \(D\)
from \(\mathcal{A}(a)\) into \(S(M)\) can be extended  to a
derivation on the whole algebra \(S(M),\) which we also denote by
\(D.\) Taking the polynomial \(p(t)=t\) in the the definition
\eqref{deri} of \(D\), we obtain that \(\Delta(a)=D(a).\) This
means that \(\Delta\) is a local derivation.

Now let \(M\) be an arbitrary abelian von Neumann algebra. It is well-known
that it has a faithful normal semifinite trace  $\tau.$ There
exists a family of mutually orthogonal projections \(\{z_i\}_{i\in
I}\) in \(M\) such that \(\bigvee\limits_{i\in I} z_i
=\mathbf{1}\) and \(\tau(z_i)<+\infty\) for all \(i\in I.\)

Take \(x=z_i x\in z_i S(M)\equiv S(z_i M).\) The condition
\eqref{firstcon} implies that
\begin{eqnarray*}
s(\Delta(x))=s(\Delta(z_i x))\leq s(z_i x)\leq z_i.
\end{eqnarray*}
Thus \(\Delta(x)=z_i\Delta(x)\in S(z_i M).\) This means that
\(\Delta\) maps \(S(z_i M)\) into itself for all \(i\in I.\)
Therefore, since  \(z_i M\) is an abelian von Neumann algebra with
a faithful normal finite trace, from above it follows that  the
restriction \(\Delta|_{S(z_i M)}\) of \(\Delta\) onto \(S(z_i M)\)
is a local derivation. Thus \(\Delta\) is also a local derivation.
The proof is complete.
\end{proof}

\begin{proof}[\textit{Proof of Theorem~$\ref{mainresult}$}]

(a) \(\Rightarrow\) (c). Let \(M\) be abelian. Before
Lemma~\ref{lemm} we already mentioned that any local derivation on
\(S(M)\) satisfies the conditions \eqref{firstcon} and
\eqref{secondcon}.

Conversely, suppose that \(\Delta\) is a linear operator on \(S(M)\) such
that \(s(\Delta(x)) \leq s(x)\) for any \(x\in S(M)\)  and
\(\Delta(p) = 0\) for all \(p\in \mathcal{P}(M).\) By
Lemma~\ref{lemm} it follows  that \(\Delta\) is a local
derivation.

(c) \(\Rightarrow\) (b). Let  \(D\) be a derivation on \(S(M).\)
Then \(D\) is a local derivation. By the assumption (c) we have
that
\[
s(D^2(x))=s(D(D(x))\leq s(D(x))\leq s(x)
\] for all \(x\)  in \(S(M)\) and
\[
D^2(p)=D(D(p))=D(0)=0
\]
for all projection \(p\) in \(M.\) Applying (c) to a linear
operator  \(D^2,\) we conclude that it is a local derivation.

(b) \(\Rightarrow\) (a). Suppose that \(M\) be a noncommutative
von Neumann algebra and \(a\in M\) be a non central  element. By
the assumption of Theorem a linear operator \(\Delta:S(M)\to
S(M)\) defined by
\[
\Delta(x)=[a, [a, x]],\, x\in S(M)
\]
is a local derivation. By \eqref{localidem}, we obtain that
\[
p[a,[a,p]]p=0
\]
for every projection \(p\in M.\) Thus
\[
p(a^2 p+pa^2-2apa)p=0,
\]
i.e.,
\begin{equation}\label{aaa}
pa^2 p=papap.
\end{equation}

Since \(M\) is noncommutative, there exists a non central
projection \(e\in M.\) Then \linebreak \(z(e)z(\mathbf{1}-e)\neq
0,\) where \(z(x)\) denotes the central support of the element \(x.\) By
\cite[Proposition 6.1.8]{KadisonII} we can find mutually
orthogonal, equivalent nonzero projections \(p, q\in M\) such that
\(p\leq e\) and \(q\leq\mathbf{1}-e.\) Take a partial isometry
\(u\in M\) such that \(uu^\ast=p\) and \(u^\ast u=q.\) Let us
consider the element
\[
a=u+u^\ast.
\]
Using the equality \(u=u u^\ast u,\) we obtain that
\[
u^2=u u= (u u^\ast u) (u u^\ast u)=u (u^\ast u) (u u^\ast) u=u q p
u=0.
\]
Then
\[
a^2=uu^\ast +u^\ast u=p+q,
\]
and therefore
\begin{equation}\label{bbb}
pa^2 p=p.
\end{equation}

Further
\[
pap=uu^\ast(u+u^\ast)uu^\ast=uu^\ast u uu^\ast +uu^\ast u^\ast
uu^\ast=0,
\]
and therefore
\begin{equation}\label{ccc}
papa p=0.
\end{equation}
Combining \eqref{bbb}, \eqref{ccc} and \eqref{aaa}, we obtain a
contradiction. This contradiction implies  that \(M\) is
commutative. The proof is complete.
\end{proof}

\textit{Remark}.
In \cite{Ber} the authors have proved that if $M$ is an abelian  von Neumann algebra with a non-atomic lattice of projections $\mathcal{P}(M)$,
then the algebra $S(M)$ admits non-zero derivations. If $D$ is such a derivation then from the above theorem we have that $D^2$ is
a non-zero local derivation. Moreover $D^2$ is not a derivation, because from \cite[Lemma 3.3]{Nur} it follows that $D^2$ is a derivation
if and only if $D$ is identically zero.

\section{Counterexample for general associative algebras}

The following example shows that Theorem~\ref{mainresult} and the last Remark fail
in the general case  of unital associative algebras.

\begin{example} \end{example}
Let \(T_2(\mathbb{C})\) be the algebra of all upper triangular
\(2\times 2\) matrices  over \(\mathbb{C},\) i.e.,
\[
T_2(\mathbb{C})=\left\{\left(%
\begin{array}{cc}
  \lambda_{11} & \lambda_{12} \\
  0 & \lambda_{22} \\
\end{array}%
\right): \lambda_{ij}\in \mathbb{C},\, 1\leq i\leq j\leq
2\right\}.
\]
Let   \(e_{11}, e_{12}, e_{22}\)  be the matrix units in
\(T_2(\mathbb{C}).\) It is known \cite{Coelho} that any derivation
on the algebra \(T_2(\mathbb{C})\) is inner. Direct computations
show that the inner derivation generated by an element
\(a=\sum\limits_{1\leq i\leq j}^2 a_{ij} e_{ij}\in
T_2(\mathbb{C})\) acts on \(T_2(\mathbb{C})\) as
\[
D\left(\sum\limits_{1\leq i\leq j}^2 \lambda_{ij}
e_{ij}\right)=\left[(a_{11}-a_{22})\lambda_{12}-a_{12}(\lambda_{11}-\lambda_{22})\right]e_{12}.
\]
Thus
\[
D^2\left(\sum\limits_{1\leq i\leq j}^2 \lambda_{ij}
e_{ij}\right)=\left[(a_{11}-a_{22})^2\lambda_{12}-(a_{11}-a_{22})a_{12}(\lambda_{11}-\lambda_{22})\right]e_{12}
\]
and \(D^2\) is an inner derivation generated by the matrix
\[
\left(%
\begin{array}{cc}
  a_{11}^2+a_{22}^2 & (a_{11}-a_{22})a_{12} \\
  0 & 2a_{11}a_{22} \\
\end{array}%
\right).
\]
So, the square of every derivation on the algebra
\(T_2(\mathbb{C})\)  is a derivation (and hence a local derivation), but \(T_2(\mathbb{C})\) is
non commutative.

\section*{Acknowledgements}

The idea of this paper came during the visit of the first author to the University of California, San Diego.
The first author would like to thank Professor Efim Zelmanov for hospitality and useful discussions during the stay  in UCSD.

\end{document}